\documentclass[12pt]{amsart}
	
\usepackage{amsmath,amssymb,bbm,wasysym,eurosym, url}

\clubpenalty=10000
\widowpenalty = 10000

%%% BEGIN MACROS

\let\phi\varphi
\let\epsilon\varepsilon
\let\theta\vartheta
\let\mathbb\mathbbm

\newcommand{\der}{\partial}
\newcommand{\cum}{{\textstyle \varint}}
\newcommand{\galg}{\mathcal{F}}
\newcommand{\bspc}{\mathcal{B}}
\newcommand{\bvp}[2]{\boxed{\begin{array}{l}#1\\#2\end{array}}}
\newcommand{\intdiffop}{\galg_\Phi[\der,\cum]}
\newcommand{\eqintdiffop}{\galg[\der,\cum_{\!\Phi}]}
\newcommand{\diffop}{\galg[\der]}
\newcommand{\cumop}{\galg[\cum]}
\newcommand{\eqcumop}{\galg[\cum_{\!\Phi}]}
\newcommand{\eqcumdiffop}{\galg[\cum_{\!\Phi} \der]}
\newcommand{\dirs}{\dotplus}
\newcommand{\orth}[1]{#1^\perp}
\newcommand{\id}{\operatorname{id}}
\newcommand{\act}{\cdot}
\newcommand{\RR}{\mathbb{R}}
\newcommand{\evl}{\text{\officialeuro}}
\newcommand{\inner}[2]{\langle #1 | #2 \rangle}
\newcommand{\sgn}{\operatorname{sgn}}
\newcommand{\fri}[1]{#1^\Diamond}

%%% END MACROS

%%% BEGIN ENVIRONMENTS

\newtheorem{theorem}{Theorem}
\newtheorem{lemma}[theorem]{Lemma}

\theoremstyle{definition}
\newtheorem{definition}[theorem]{Definition}
\newtheorem{remark}[theorem]{Remark}
 
%%% END ENVIRONMENTS

\begin{document}

% --- Author Metadata here ---
%\conferenceinfo{ISSAC '15,}{July 6 - 9, 2015, Bath, U.K.}
% \CopyrightYear{2015}
%\CopyrightYear{2007} % Allows default copyright year (20XX) to be over-ridden - IF NEED BE.
%\crdata{0-12345-67-8/90/01}  % Allows default copyright data (0-89791-88-6/97/05) to be over-ridden - IF NEED BE.
% --- End of Author Metadata ---

\title{Green's Functions for\\ Stieltjes Boundary Problems}

\author{M. Rosenkranz \and N. Serwa}

\date{16 April 2015}

\maketitle
\begin{abstract}
  Stieltjes boundary problems generalize the customary class of
  well-posed two-point boundary value problems in three independent
  directions, regarding the specification of the boundary conditions:
  (1) They allow more than two evaluation points. (2) They allow
  derivatives of arbitrary order. (3) Global terms in the form of
  definite integrals are allowed. Assuming the Stieltjes boundary
  problem is regular (a unique solution exists for every forcing
  function), there are symbolic methods for computing the associated
  Green's operator.

  In  the classical case of well-posed two-point boundary value
  problems, it is known how to transform the Green's operator into the
  so-called Green's function, the representation usually preferred by
  physicists and engineers. In this paper we extend this
  transformation to the whole class of Stieltjes boundary problems. It
  turns out that the extension (1) leads to more case distinction, (2)
  implies ill-posed problems and hence distributional terms, (3) has
  apparently no effect on the structure of the Green's function.
\end{abstract}

\section{Introduction}
\label{sec:intro}
Boundary problems for linear ordinary differential equations (LODEs)
or linear partial differential equations (LPDEs) are certainly among
the most important model types in the engineering
sciences. Interestingly, their systematic treatment in Symbolic
Computation started rather recently~\cite{Rosenkranz2005}. For
handling the central problems of solving and factoring boundary
problems, a \emph{differential algebra setting} for LODEs is employed
in~\cite{RosenkranzRegensburger2008a,RosenkranzRegensburger2008} and
for LPDEs
in~\cite{RosenkranzRegensburgerTecBuchberger2009,RosenkranzPhisanbut2013}.
An overarching abstract framework based only on Linear Algebra is
developed in~\cite{RegensburgerRosenkranz2009}. For the classical
treatment of boundary problems in Analysis, we refer
to~\cite{Duffy2001,Fokas2008,Stakgold2011,Teschl2012}.

In this paper we restrict ourselves to LODEs, where the ``industrial
standard'' for solving boundary problems is their so-called
\emph{Green's function}. This is in stark contrast to the
operator-based methodology used in the above references. In fact,
given a fundamental system, the algorithm
of~\cite{Rosenkranz2005,RosenkranzRegensburger2008a} computes the
solution of a boundary problem in the form of its \emph{Green's
  operator}. In the classical setting of well-posed two-point boundary
value problems (see Section~\ref{sec:stieltjes-bp}), this algorithm
admits an optional extra step for extracting the corresponding Green's
function. Our goal here is to extend this postprocessing step to the
considerably larger class of Stieltjes boundary problems (see
Section~\ref{sec:stieltjes-bp}).

One way to understand the relationship between Green's operators and
functions is to view the latter as a certain canonical form. For
making this precise we equip the ring of integro-differential
operators with a slightly different set of reduction rules favoring
multiply initialized integrals, leading to the ring of \emph{equitable
  integro-differential operators}
(Section~\ref{sec:equitable-intdiffop}).

A \emph{simple example} will make this clear---in fact the simplest of
all honest boundary problems~\cite[\S3.2]{Rosenkranz2005}. Given a
forcing function~$f \in C^\infty[0,1]$, we want to find~$u \in
C^\infty[0,1]$ such that
\begin{equation*}
  \bvp{u'' = f,}{u(0) = u(1) = 0.}
\end{equation*}
The Green's operator~$G\colon C^\infty[0,1] \to C^\infty[0,1]$ of this
problem is defined by~$Gf = u$. Using the standard reduction system
of~\cite{RosenkranzRegensburger2008a}, we would distinguish one
integral like~$\cum f := \cum_0^x f(\xi) \, d\xi$ and then obtain the
Green's operator in the canonical form
\begin{equation}
  \label{eq:ex-green-std}
  G = x \cum - \cum x + x \lfloor 1 \rfloor \cum x - x \lfloor 1 \rfloor \cum,
\end{equation}
where~$\lfloor \alpha \rfloor$ denotes the evaluation functional~$f
\mapsto f(\alpha)$ for any~$\alpha \in \RR$, in analogy to the
multiplier notation of~\cite{Rosenkranz2005}. For extracting the
Green's function, however, it is more useful to use the alternative
canonical form
\begin{equation}
  \label{eq:ex-green-equit}
  G = x \cum_{\!0}\, x - x \cum_{\!1}\, x - \cum_{\!0}\, x + x \cum_{\!1}
\end{equation}
where~$\cum_{\!\alpha} f := \cum_\alpha^x f(\xi) \, d\xi$ now denotes
the integral initialized at the point~$\alpha \in \{0, 1\}$. In fact,
this is the form given in~\cite{Rosenkranz2005}, and we shall see in
Section~\ref{sec:equitable-intdiffop} that the setting of
biintegro-differential operators used there is essentially a special
case of the equitable operator ring employed in this paper. The point
of the canonical form~\eqref{eq:ex-green-equit} is that it allows us
to apply the defining relation~$Gf(x) = \cum_0^1 \, g(x,\xi) \, f(\xi)
\, d\xi$ of the Green's function directly to obtain the latter as
\begin{equation}
  \label{eq:ex-green-op}
  g(x,\xi) = 
  \begin{cases}
    (x-1)\xi & \text{for $0 \le \xi \le x \le 1$,}\\
    x(\xi-1) & \text{for $0 \le x \le \xi \le 1$.}
  \end{cases}
\end{equation}
Heuristically speaking, one moves the~$\cum_{\!0}$ terms to the upper
and the~$\cum_{\!1}$ terms to the lower branch, at the same time
translating the ``$x$'' after the integrals into~$\xi$. Note
incidentally that~$g(x,\xi) = g(\xi,x)$ in the above Green's
function~\eqref{eq:ex-green-op}. As is well known in
Analysis~\cite[\S7]{CoddingtonLevinson1955} \cite[\S5]{Teschl2012}, this is a consequence of
the self-adjoint nature of this boundary problem---a topic that we
would wish to investigate in the future for the more general class
of Stieltjes boundary Problems (Section~\ref{sec:conclusion}).

We will elaborate on the above principles to \emph{generalize} it in
three ``orthogonal directions'': (1) We allow more than two evaluation
points, leading to an increased number of case branches. (2) Using
derivatives of arbitrary order in the boundary conditions leads to
distributional terms. (3) Boundary conditions with integral terms
(so-called ``non-local problems'') are also included; they do not lead
to further complications.

\section{Stieltjes Boundary Problems}
\label{sec:stieltjes-bp}
For giving a precise definition of the class of admissible boundary
problems, we follow the setting
of~\cite{RosenkranzRegensburger2008a}. Hence let~$(\galg, \der, \cum)$
be a fixed ordinary integro-differential $K$-algebra (here ordinary
means~$\ker \der = K$). Later we shall specialize this to~$\galg =
C^\infty(\RR)$, the real- or complex-valued smooth function.  This is
theoretically convenient but of course needs to be replaced by a
suitable constructive subalgebra for actual computations.

The \emph{ring of integro-differential operators} over~$\galg$,
introduced in~\cite[\S3]{RosenkranzRegensburger2008a}, will be denoted
here by~$\intdiffop$ to emphasize its dependence of the chosen set of
characters~$\Phi$, and also to mark the contrast to the equitable
operator ring~$\eqintdiffop$ to be introduced in
Section~\ref{sec:equitable-intdiffop}, where the integral operators
are parametrized by~$\Phi$. In the case of~$\galg = C^\infty(\RR)$,
these characters will be evaluations at given points of~$\RR$ so
that we may take~$\Phi \subseteq \RR$.

We recall the \emph{standard decomposition}
\begin{equation}
  \label{eq:std-decomp}
  \intdiffop = \diffop \dirs \cumop \dirs (\Phi),
\end{equation}
where~$\diffop$ denotes the subalgebra of differential operators (the
$K$-subalgebra of~$\intdiffop$ generated by~$\galg$ and~$\der$),
$\cumop$ the nonunital subalgebra of integral operators (the nonunital
$K$-subalgebra of~$\intdiffop$ generated by~$\galg$ and~$\cum$),
and~$(\Phi)$ the two-sided ideal of~$\intdiffop$ generated by the
characters in~$\Phi$. The corresponding \emph{right} ideal~$|\Phi) =
\Phi \cdot \intdiffop$ is known as the ideal of \emph{Stieltjes
  conditions}, and one may check that~$(\Phi)$ is in fact the
left~$\galg$-module generated by the Stieltjes conditions.

From the viewpoint of applications, Stieltes conditions~$\beta \in
(\Phi)$ are easier to comprehend in terms of their $\intdiffop$-normal
form: They can be described uniquely as sums
\begin{equation}
  \label{eq:stieltjes-cond}
  \beta = \sum_{\phi \in \Phi} \sum_{i \ge 0} a_{\phi,i} \phi \der^i
  + \sum_{\phi \in \Phi} \phi \cum f_\phi
\end{equation}
with only finitely many~$a_{\phi,i} \in K$ and~$f_\phi \in \galg$
nonzero. The double sum in~\eqref{eq:stieltjes-cond} is called the
\emph{local part} of~$\beta$, the subsequent sum its~\emph{global
  part}. In the important~$C^\infty(\RR)$ case with distinguished
integral~$\cum = \cum_0^x$, this yields
\begin{equation*}
  \beta(u) = \sum_{\phi,i} a_{\phi,i} u^{(i)}(\phi) + \sum_\phi \cum_0^\phi
  f_\phi(\xi) \, u(\xi) \, d\xi,
\end{equation*}
for certain~$a_{\phi,i} \in \RR$ and~$f_\phi \in C^\infty(\RR)$.

An $n$-th order \emph{Stieltjes boundary problem} is a pair~$(T,
\bspc)$ with a monic differential operator~$T \in \diffop$ of order~$n$
and a boundary space~$\bspc \le \galg^*$ given as linear span~$\bspc = [\beta_1, \dots, \beta_n]$
of~$n$ linearly
independent Stieltjes conditions. In traditional representation, such a boundary problem is
displayed as
\begin{equation}
  \label{eq:bvp}
  \bvp{Tu=f,}{\beta_1 u = \cdots = \beta_n u = 0,}
\end{equation}
with the understanding that~$u \in \galg$ is desired for any
prescribed forcing function~$f \in \galg$. For the (usual) Green's
operator to be well-defined, we need the boundary
problem~\eqref{eq:bvp} to be \emph{regular} in the sense that~$\ker T
\dirs \orth{\bspc} = \galg$, where~$\orth{\bspc} = \{ u \in \galg \mid \beta(u) = 0 \;
\text{for all $\beta \in \bspc$} \}$ is the
corresponding space of admissible functions. Regularity is equivalent
to the requirement that~\eqref{eq:bvp} has a unique solution~$u \in
\galg$ for every given~$f \in \galg$. An algorithmic method for
testing regularity starts from a fundamental system~$u_1, \dots, u_n
\in \galg$ for~$T$, meaning a $K$-basis of~$\ker
T$. Then~\eqref{eq:bvp} is regular iff the evaluation matrix
\begin{equation}
  \label{eq:ev-matrix}
  \beta(u) = 
  \begin{pmatrix}
    \beta_1(u_1) & \cdots & \beta_1(u_n)\\
    \vdots & \ddots & \vdots\\
    \beta_n(u_1) & \cdots & \beta_n(u_n)
  \end{pmatrix}
  \in K^{n \times n}
\end{equation}
is regular; see~(15) of \cite{RosenkranzRegensburger2008a}. Given a fundamental system~$u_1, \dots, u_n$ for~$T$, the
solution algorithm of~\cite{RosenkranzRegensburger2008a} computes the
Green's operator of any regular Stieltjes boundary problem as an
integro-differential operator~$G \in \intdiffop$.

Within the class of Stieltjes boundary problems, we make the following
distinctions in order to characterize the \emph{classical scenario} as
a certain special case.

\begin{definition}
  \label{def:well-posed}
  A Stieltjes boundary problem~$(T, \bspc)$ of order~$n$ with~$\bspc =
  [\beta_1, \dots, \beta_n]$ is called \emph{well-posed} if
  the~$\beta_i$ can be chosen with all derivatives having order
  below~$n$; otherwise it is called \emph{ill-posed}. Furthermore, we
  call~$(T, \bspc)$ an \emph{$m$-point boundary problem} if the
  maximal number of evaluation points occurring in any
  $K$-basis~$(\beta_i)$ of~$\bspc$ is~$m$, and we call~$(T, \bspc)$
  \emph{local} if the $\beta_i$ can be chosen without global parts.
\end{definition}

Let us digress a bit on the notion of ill-posed boundary
problems. Following Hadamard, a problem is generally
called~\emph{well-posed}~\cite[p.~86]{Engl1997} if it is regular
(meaning its solution~$u$ exists and is unique for all given data~$f$)
as well as stable (meaning~$u$ depends continuously
on~$f$). Otherwise, one speaks of an \emph{ill-posed} problem. In the
case of boundary problems~\eqref{eq:bvp}, we search for~$u \in
C^\infty(\RR)$, and the data is given by the forcing function~$f \in
C^\infty(\RR)$. Stability---and hence well-posedness---depends on the
topology chosen for the function space~$C^\infty(\RR)$. Using the
$L^2$ norm as in many application problems, the distinction between
well- and ill-posed boundary problems coincides with the one given
above.

Since local boundary problems involve only evaluations of the unknown
function (rather than definite integrals), we also call them 
``boundary \emph{value} problems''. We can now characterize the
classical case, described for example
in~\cite[\S7]{CoddingtonLevinson1955}, by the following three-fold
restriction: They are the \underline{well-posed} \underline{two}-point
boundary \underline{value} problems. (Sometimes one meets the further
restriction to self-adjoint boundary problems.)

The classical case (in the above sense) is clearly the most frequent
case in the applications (but this could also be due to a selection
bias: having a well-equipped toolbox for classical problems might
tempt engineers to restrict their attention to classical
problems). Nevertheless, \emph{multi-point boundary value problems}
are also important for some applications~\cite[Ex.~1.6]{Agarwal1986},
\cite{SunRen2010}, \cite{Loud1968}, \cite{PangDongWei2006},
\cite{Beesack1962}. Boundary problems with nonlocal conditions are
more seldom, they are usually studied for nonlinear
equations~\cite{BenchohraHendersonLucaOuahab2014},
\cite{Jankowski2002}; the linear case serves as the initial
approximation. Finally, the case of ill-posed boundary problems
is---for obvious reasons---mostly avoided when engineering problems
are modelled. However, there are cases where their treatment is
inevitable, typically in the context of inverse
problems~\cite{EnglHankeNeubauer1996a}. Since the numerical treatment
of such problems is very delicate, it is of paramount importance to
have exact symbolic algorithms wherever this is possible.

We will lift all three of these restrictions for the algorithm of
\emph{extracting Green's functions}, which will be given below
(Section~\ref{sec:extract-gf}). As indicated in the Introduction, the
crucial tool for this purpose---even in the classical case---is the
ring of equitable integro-differential operators with its alternative
canonical forms.

\section{Equitable Operators}
\label{sec:equitable-intdiffop}
The passage from the standard integro-differential operator
ring~$\intdiffop$ to its equitable variant~$\eqintdiffop$ is based on
the \emph{fundamental theorem of calculus}~$\cum_{\!\phi}^x\, f'(\xi)
\, d\xi = f(x) - f(\phi)$ for any function~$f \in C^\infty(\RR)$ and
initialization point~$\phi \in \RR$. Likewise, if~$(\galg, \der,
\cum)$ is an arbitary integro-differential $K$-algebra and~$\phi$ a
character (multiplicative linear functional), one can use the
definition~$\cum_{\!\phi} := (\id-\phi) \cum$ to obtain the
corresponding relation~$\cum_{\!\phi} \, \der = \id - \phi$. In some
contexts (especially in the presence of several integral operators
like~$\cum^x$ and~$\cum^y$ on bivariate functions), it may be useful
to write the integral~$\cum_{\!\phi}$ as~$\cum_{\!\phi}^x$. If~$\psi$
is another character, one observes the relation~$\psi \cum_{\!\phi} =
\cum_{\!\psi} - \cum_{\!\phi}$, and it is natural to
write~$\cum_{\!\phi}^\psi$ for both expressions.

Note that~$(\galg, \der, \cum_{\!\phi})$ is again an ordinary
integro-differential $K$-algebra, and the preference of~$\cum$
over~$\cum_{\!\phi}$ can appear arbitrary in certain
settings. Accordingly, one may build the ring of integro-differential
operators by adjoining \emph{all} $\cum_{\!\phi}$ while the
characters~$\phi$ themselves are now redundant due to the above
fundamental relation. The precise formulation of the resulting
ring~$\eqintdiffop$ as a quotient is described
in~\cite[\S5.1]{RegensburgerRosenkranz2009a}. For our present
purposes, we shall only list its relations (see
Table~\ref{fig:equitable-rules} where~$\cdot$ denotes the natural
action of the operators), which are an easy consequence of the
relations of the standard integro-differential operator
ring~$\intdiffop$.

\def\qd{\kern0.5em}
\begin{table}[h]
  \newcommand{\ra}{\rightarrow}
  \centering
  \renewcommand{\baselinestretch}{1.5}
  \small
  \begin{tabular}[h]{|@{\qd}lcl@{\qd}|@{\qd}lcl@{\qd}|@{\qd}lcl@{\qd}|}
    \hline
    $f g$ & $\ra$ & $f \act g$ &
    $\der f$ & $\ra$ & $\der \act f + f \der$ &
    $\der\smash{\cum_{\!\phi}^x}$ & $\ra$ & $1$\\\hline
    $\smash{\cum_{\!\phi}^x} \, f \smash{\cum_{\!\psi}^x}$ & $\ra$ &
    \multicolumn{7}{l|}{$(\smash{\cum_{\!\phi}^x} \act f) 
      \smash{\cum_{\!\psi}^x} - \smash{\cum_{\!\phi}^x} \,
      (\smash{\cum_{\!\phi}^x} \act f)$}\\\hline
    $\smash{\cum_{\!\phi}^x} f \der$ & $\ra$ &
    \multicolumn{7}{l|}{$f - \smash{\cum_{\!\phi}^x} (\der \act f) -
      \phi \act f + (\phi \act f) \smash{\cum_{\!\phi}^x} \der$}\\
    \hline
  \end{tabular}
  \medskip
  \caption{Equitable Integro-Differential Relations}
  \label{fig:equitable-rules}
\end{table}

Similar to the standard decomposition~\eqref{eq:std-decomp}, we have
also the \emph{equitable decomposition} $\eqintdiffop = \diffop \dirs
\eqcumop \dirs \eqcumdiffop$ where~$\eqcumop$ is the nonunital
subalgebra of equitable integral operators~$\sum_{i=0}^n f_i
\cum_{\!\phi}^x g_i$ and~$\eqcumdiffop$ the $\galg$-submodule
consisting of~$\sum_{i=0}^n f_i \cum_{\!\phi}^x \der^i$; this leads to
the obvious normal forms in~$\eqintdiffop$.

The so-called \emph{translation isomorphism}~$\iota\colon \intdiffop \to
\eqintdiffop$ leaves $f \in \galg$ and~$\der$ invariant while using
the above fundamental relation via~$\iota(\phi) = \id - \cum_{\!\phi}
\der$ and~$\iota^{-1}(\cum_{\!\phi}) = (\id-\phi)\cum$. Note that this
holds also for the character~$\evl := \id - \cum \der$ associated to
the distinguished integral~$\cum = \cum_{\!\evl}$
underlying~$\intdiffop$. 

Specializing to~$\galg = C^\infty(\RR)$ and~$\Phi = \{0,1\}$, we can
deal with the \emph{example} in the Introduction, where we have the
normal form~$G \in \intdiffop$ in~\eqref{eq:ex-green-std} along with
its equitable variant~$\iota(G) \in \eqintdiffop$
in~\eqref{eq:ex-green-equit}. In such two-point cases with
characters~$\Phi = \{\alpha, \beta\}$, the equitable operator
ring~$\eqintdiffop$ is essentially the same as the ring of
\emph{biintegro-differential operators}. More precisely, we obtain a
biintegro-differential algebra~$(\galg, \der, \cum^*, \cum_*)$ with
integral~$\cum^* := \cum_\alpha$ and cointegral~$\cum_* :=
-\cum_\beta$ in the sense
of~\cite[Def.~3.23]{RegensburgerRosenkranz2009a}. Note that~$\cum^*$
and~$\cum_*$ are adjoint with respect to the inner product defined
by~$\inner{f}{g} := (\cum^*+\cum_*)(fg) = \cum_\alpha^\beta
fg$. Incidentally, the notion of biintegro-differential algebra
coincides with the (badly named) notion of ``analytic algebra''
introduced in~\cite[Def.~2]{Rosenkranz2005} and replicated
in~\cite[Ex.~5]{RosenkranzRegensburger2008a}. Clearly, the operator
ring resulting from~$\diffop$ by adjoing~$\cum^*$ and~$\cum_*$ is the
same as~$\eqintdiffop$, modulo the sign change in the cointegral.

Note that also $\cum_{\!\phi} \in \intdiffop$ and~$\phi \in
\eqintdiffop$ are legitimate expressions via the above translation
isomorphism. They are not in canonical form but we may think of them
as a kind of \emph{abbreviation} for the corresponding canonical
expression.

In fact, the extraction of Green's functions is based on the following
slight variation of the equitable integro-differential operator
ring~$\eqintdiffop$. Writing any element~$U \in \intdiffop$ in the
form~$U = T + K + B$ with~$T \in \diffop$, $K \in \cumop$ and~$B \in
(\Phi)$ according to~\eqref{eq:std-decomp}, we let~$T \in
\eqintdiffop$ and~$K \in \galg[\cum_{\!\evl}] \subseteq \eqcumop$
invariant while translating~$B \in (\Phi)$ as follows. Since~$(\Phi)$
is the left~$\galg$-module generated by Stieltjes
conditions~\eqref{eq:stieltjes-cond}, we may split~$B = \lambda +
\gamma$ into a left~$\galg$-linear combination~$\lambda$ of local
Stieltjes conditions and a left~$\galg$-linear combination~$\gamma$ of
global Stieltjes conditions. It turns out to be expedient to
keep~$\lambda$ in this form, without eliminating the characters
via~$\phi = \id - \cum_{\!\phi} \der$, but to translate~$\gamma$
via~$\phi \cum_{\!\evl} = \cum_{\!\evl} - \cum_{\!\phi} =:
\cum_{\!\evl}^\phi$. This is what we mean when referring in the sequel
to the \emph{equitable form} of an integro-differential operator~$U$.

\section{Extracting Green's Functions}
\label{sec:extract-gf}
We now turn to the central task of this paper, the extraction of the
Green's function~$g(x,\xi)$ corresponding to the Green's operator~$G
\in \eqintdiffop$ computed by the algorithm
of~\cite{RosenkranzRegensburger2008a} and converted to equitable form
as described in Section~\ref{sec:equitable-intdiffop}. Hence we
specialize now to~$\galg = C^\infty(\RR)$. Note that we may think
of~$g(x,\xi)$ as a kind of \emph{coordinate representation} of the
induced operator action~$G\colon \galg \to \galg$; in quantum
mechanics this would correspond to the ``position basis'' (as opposed
to the ``momentum basis'' in the Pontryagin dual reached via the
Fourier transform). Hence we will use the notation~$g(x,\xi) =
G_{x\xi}$, thinking of the~$x,\xi$ rather like continuous indices
similar to the discrete indices~$i,j$ in the matrix elements~$A_{ij}$
of some~$A \in K^{n \times n}$.

In fact, we will use this notation~$G_{x\xi}$ for any equitable
integro-differential operator~$G \in \eqintdiffop$. Its result will in
general contain Dirac distributions~\cite[\S2]{Stakgold2011} and their derivatives but nothing
beyond that. Since all boundary problems considered in this paper have
only finitely many evaluation points~$\alpha \in \Phi \subset \RR$,
one may choose an interval~$J \subset \RR$ containing all
the~$\alpha$. Hence the~$C(J^2)$-module~$\mathcal{G} \subset \mathcal{D}'(J^2)$ generated by
the \emph{Dirac distributions}~$\delta_\alpha$ and their derivatives will be sufficient to
capture all Green's ``functions'' $G_{x\xi} \in \mathcal{G}$. Here and
in the sequel we shall follow the common engineering (and also applied
maths) practice of referring to distributions like~$\delta_\alpha$ as
functions. In the same vein, we shall also write~$\delta(\xi-\alpha)$
in place of~$\delta_\alpha$, in view of the defining
property~$\cum_{\!J}\, \delta(\xi-\alpha) \, f(\xi) \, d\xi =
f(\alpha)$.

The transformation from Green's operators to Green's functions
\begin{equation*}
  \eqintdiffop \to \mathcal{G}, \; G \mapsto G_{x\xi}
\end{equation*}
is clearly an $\RR$-linear map, hence it will be sufficient to define it on the
\emph{canonical $\RR$-basis} of~$\intdiffop \cong
\eqintdiffop$. Following the strategy of the example in the
Introduction, the easiest part is~$\cumop \subseteq \intdiffop$, which
is handled by setting
\begin{equation*}
  (f \cum g)_{x\xi} = f(x) \, g(\xi) \, [0 \le \xi \le x]
  - f(x) \, g(\xi) \, [x \le \xi \le 0],
\end{equation*}
where we use the Iverson bracket notation~$[P]$ signifying~$1$ if the
property~$P$ is true and zero otherwise. Note that at most one of the
two summands above is nonzero for fixed~$(x,\xi)$. Since~$(\Phi)
\subset \intdiffop$ is a left~$\galg$-module over~$|\Phi)$, we settle
this part via
\begin{align*}
  (f \, \lfloor \alpha \rfloor \der^i)_{x\xi} &= (-1)^i f(x) \,
  \delta^{(i)}(\xi-\alpha),\\
  (f \, \lfloor \alpha \rfloor \cum g)_{x\xi} &= \sgn(\alpha) \, f(x) \,
  g(\xi) \, [0 \le \xi \le \alpha] .
\end{align*}
Finally, on~$\diffop$ we define
\begin{equation*}
  (f \der^i)_{x\xi} = (-1)^i \, f(x) \, \delta^{(i)}(x-\xi),
\end{equation*}
and the definition is complete in view
of~\eqref{eq:std-decomp}. Moreover, it is easy to check that the
assignment~$G \mapsto G_{x\xi}$ is correct in the sense that~$Gf =
\cum_{\!J}\, G_{x\xi} \, f(\xi) \, d\xi$. The isomorphism~$\iota$ of
Section~\ref{sec:equitable-intdiffop} may now be employed to obtain
the required transformation~$\eqintdiffop \to \mathcal{G}$. In fact, the
above case~$f \cum g \in \cumop$ generalizes immediately to
\begin{equation*}
  (f \cum_{\!\alpha}\, g)_{x\xi} = f(x) \, g(\xi) \, [\alpha \le \xi \le x]
  - f(x) \, g(\xi) \, [x \le \xi \le \alpha],
\end{equation*}
which will turn out to be the essential clause for extracting Green's
functions of (well-posed) multi-point boundary problems. For seeing
this, we need a more detailed description of the underlying Green's
operators.

We turn first to the easy case of a one-point boundary problem, more
appropriately known under the name of \emph{initial value
  problems}~$(T,[\evl, \dots, \evl \der^{n-1}])$ for~$T \in \diffop$
of order~$n$. The corresponding Green's operator is called the
fundamental right inverse~$\fri{T}$ and can be computed easily via the well-known ``variation of constants'' formula~\cite[Thm.~6.4]{RegensburgerRosenkranz2009a}: If~$u_1, \dots, u_n$ is a fundamental system for~$T$ with
  Wronskian matrix~$W$, the fundamental right inverse is given by
\begin{equation*}
  \fri{T} = \sum_{j=1}^n u_{j} \cum \frac{d_j}{d}.
\end{equation*}
Here~$d = \det(W)$ and~$d_i=\det(W_i)$, where~$W_i$ denotes the
  matrix resulting from~$W$ when replacing the~$i$-th column by the
  $n$-th unit vector of~$K^n$.

What we shall need in the sequel
is how~$\fri{T}$ reacts to left multiplication by~$\diffop$.
\begin{lemma}
  Let~$T \in \diffop$ be any monic differential operator of order~$n$,
  and choose a fundamental system~$u_1, \dots, u_n$ for~$T$ with
  Wronskian matrix~$W$. Then we have
  \begin{align}
    \label{eq:der-fri}
    \der^k &\fri{T} = \sum_{j=1}^n u_{j}^{(k)} \cum \frac{d_j}{d} + 
    \sum_{j=1}^k \der^{k-j} \rho_j\\
    & \text{with}\qquad
    \rho_k := \frac{1}{d} \sum_{j=1}^n u_j^{(k-1)} d_j \in \galg,\notag
  \end{align}
  where~$d$ and~$d_i$ are as above.
\end{lemma}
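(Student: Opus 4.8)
The plan is to prove the identity by induction on~$k$, the only tools being the operator Leibniz rule~$\der f \to (\der \act f) + f \der$ and the relation~$\der \cum \to 1$ from the standard integro-differential operator ring~$\intdiffop$. For~$k = 0$ the claimed formula reduces to the given expression~$\fri{T} = \sum_{j=1}^n u_j \cum \frac{d_j}{d}$, since the trailing sum~$\sum_{j=1}^0$ is empty; this settles the base case at once.

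For the inductive step I would assume the formula for~$k$ and left-multiply by~$\der$. Applying~$\der$ to a single summand of the integral part and using the two relations gives
\[
\der\, u_j^{(k)} \cum \tfrac{d_j}{d}
= u_j^{(k+1)} \cum \tfrac{d_j}{d} + u_j^{(k)}\, \der\cum\, \tfrac{d_j}{d}
= u_j^{(k+1)} \cum \tfrac{d_j}{d} + u_j^{(k)}\, \tfrac{d_j}{d},
\]
where the last equality invokes~$\der \cum = 1$. Summing over~$j$, the integral part becomes~$\sum_j u_j^{(k+1)} \cum \frac{d_j}{d}$, while the collected boundary terms are~$\frac{1}{d} \sum_j u_j^{(k)} d_j$, which is precisely~$\rho_{k+1}$ by definition (note that~$\rho_{k+1}$ carries~$u_j^{((k+1)-1)} = u_j^{(k)}$). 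Left-multiplying the remaining sum~$\sum_{j=1}^k \der^{k-j} \rho_j$ by~$\der$ merely raises each exponent by one, yielding~$\sum_{j=1}^k \der^{k+1-j} \rho_j$.

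Collecting everything, I obtain~$\der^{k+1} \fri{T} = \sum_j u_j^{(k+1)} \cum \frac{d_j}{d} + \rho_{k+1} + \sum_{j=1}^k \der^{k+1-j} \rho_j$. The closing observation is that~$\rho_{k+1} = \der^0 \rho_{k+1}$ is exactly the missing~$j = k+1$ summand, so it absorbs into the sum to produce~$\sum_{j=1}^{k+1} \der^{k+1-j} \rho_j$, which completes the step. I expect the only genuine care to lie in this index bookkeeping: identifying the freshly produced boundary term with the correct~$\rho_{k+1}$ and recognizing that it slots in as the new top index of the shifted sum rather than causing any overlap.
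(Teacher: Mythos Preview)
Your proof is correct and follows essentially the same route as the paper's: induction on~$k$, with the base case being the variation-of-constants formula for~$\fri{T}$, and the step obtained by left-multiplying by~$\der$, using~$\der\cum=1$ to produce the boundary term~$\rho_{k+1}$, and absorbing it as the new top index of the shifted sum. Your write-up is in fact slightly more explicit about the Leibniz computation than the paper's, which just states the outcome of differentiating once.
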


Note that~$\rho_1 = \cdots = \rho_{n-1} = 0$ by the definition of
the~$d_j$; hence the second sum in~$\der^k \fri{T}$ is only present
for~$k \ge n$, and we may equivalently write its range as~$j=n, \dots,
k$. Furthermore, we have~$\rho_n = 1$ from the definition
of~$d$. For~$k>n$, however, the~$\rho_k$ are functions of~$\galg$, so
in general they \emph{do not commute} with the~$\der^{k-j}$ in the
second summand of~\eqref{eq:der-fri}.

\begin{proof}
  We use induction on~$k$. In the base case~$k=0$, this is the usual
  variation-of-constants formula as given
  in~\cite[p.~74]{CoddingtonLevinson1955};
  see~\cite[Prop.~22]{RosenkranzRegensburger2008a}
  and~\cite[Thm.~6.4]{RegensburgerRosenkranz2009a} for its operator
  formulation. Now assume~\eqref{eq:der-fri} for fixed~$k \ge 0$; we
  show it for~$k+1$. By the induction hypothesis we obtain
  \begin{equation*}
    \der^{k+1} \fri{T} = \sum_{j=1}^n u_j^{(k+1)} \cum \frac{d_j}{d} + 
    \frac{1}{d} \sum_{j=1}^n u_j^{(k)} d_j + \sum_{j=1}^k
    \der^{k-j+1} \rho_j,
  \end{equation*}
  which is just~\eqref{eq:der-fri} for~$k+1$ since the middle
  sum is~$\rho_{k+1}$ and can be absorbed into the third.
\end{proof}

\begin{lemma}
  \label{lem:green-op}
  The Green's operator of any regular Stieltjes boundary problem is
  contained in~$\eqcumop + \mathcal{L}$, where~$\mathcal{L}$ denotes
  the left $\galg$-module generated by the local Stieltjes conditions.
\end{lemma}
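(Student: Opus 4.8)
The plan is to start from the explicit operator form of the Green's operator and read off its equitable components. Writing~$P$ for the projector onto~$\ker T$ along~$\orth{\bspc}$, regularity gives~$P = \sum_{i=1}^n u_i\, b_i$ with Stieltjes conditions~$b_i \in \bspc$ dual to the fundamental system (so that~$b_i(u_j) = \delta_{ij}$, obtained by inverting the evaluation matrix~\eqref{eq:ev-matrix}), and the Green's operator is~$G = (\id - P)\fri{T} = \fri{T} - \sum_{i=1}^n u_i\, b_i \fri{T}$. Since~$\fri{T} = \sum_j u_j \cum (d_j/d)$ lies in~$\cumop = \galg[\cum_{\!\evl}] \subseteq \eqcumop$, and since both~$\eqcumop$ and~$\mathcal{L}$ are left~$\galg$-modules, it suffices to show that each product~$b_i \fri{T}$ lies in~$\eqcumop + \mathcal{L}$; left multiplication by~$u_i \in \galg$ then keeps us inside~$\eqcumop + \mathcal{L}$.

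To this end I would split each Stieltjes condition~$b_i = b_i^{\mathrm{loc}} + b_i^{\mathrm{glob}}$ into its local and global parts according to~\eqref{eq:stieltjes-cond} and treat them separately. For the \emph{global} part~$b_i^{\mathrm{glob}} = \sum_\phi \phi \cum f_\phi$, first note that~$\cum f_\phi \fri{T} \in \cumop$, so it admits a normal form~$\sum_k h_k \cum g_k$. Applying the character~$\phi$ and using multiplicativity~$\phi h = \phi(h)\,\phi$ together with~$\phi \cum_{\!\evl} = \cum_{\!\evl}^\phi$ yields~$\phi \cum f_\phi \fri{T} = \sum_k \phi(h_k)\, \cum_{\!\evl}^\phi\, g_k \in \eqcumop$. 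Hence~$b_i^{\mathrm{glob}} \fri{T} \in \eqcumop$, which confirms the assertion of the Introduction that global (nonlocal) terms do not complicate the structure of the Green's function.

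For the \emph{local} part~$b_i^{\mathrm{loc}} = \sum_{\phi,k} a_{\phi,k}\, \phi \der^k$, I would invoke the preceding Lemma, which decomposes~$\der^k \fri{T} = \sum_j u_j^{(k)} \cum (d_j/d) + \sum_{j=1}^k \der^{k-j}\rho_j$ into a summand in~$\cumop \subseteq \eqcumop$ and a summand in~$\diffop$. Applying~$\phi$ to the first summand lands in~$\eqcumop$ exactly as in the global case; applying~$\phi$ to the~$\diffop$ summand, written in normal form as~$\sum_i c_i \der^i$, gives~$\sum_i \phi(c_i)\,\phi\der^i$, which is again a \emph{local} Stieltjes condition and hence lies in~$\mathcal{L}$. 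Thus~$\phi\der^k\fri{T} \in \eqcumop + \mathcal{L}$, whence~$b_i^{\mathrm{loc}} \fri{T} \in \eqcumop + \mathcal{L}$ as well, and combining this with the global case completes the argument.

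The step I expect to be most delicate is the local part: one must verify that passing the character~$\phi$ through the~$\diffop$ summand produced by the Lemma yields \emph{only} local Stieltjes conditions and, crucially, no~$\eqcumdiffop$ contributions of the form~$\sum f_i \cum_{\!\phi}\der^i$. This is precisely the reason for keeping the local conditions in the character form~$\phi\der^i$ rather than eliminating them via~$\phi = \id - \cum_{\!\phi}\der$: doing so isolates the entire derivative-character content of~$G$ inside~$\mathcal{L}$, leaving an otherwise purely integral operator in~$\eqcumop$.
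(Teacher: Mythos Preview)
Your argument is correct and follows essentially the same route as the paper: write~$G=(\id-P)\fri{T}$ with~$\fri{T}\in\eqcumop$, split each Stieltjes condition in~$P$ into its local and global parts, use~$\phi\cum_{\!\evl}=\cum_{\!\evl}^\phi$ (equivalently, that~$\eqcumop$ is a subalgebra) to place the global contributions in~$\eqcumop$, and apply the preceding lemma~\eqref{eq:der-fri} to split~$\phi\der^k\fri{T}$ into an~$\eqcumop$ summand and a~$\diffop$ summand which, after the character, yields only local conditions in~$\mathcal{L}$. The sole cosmetic difference is that the paper carries the left~$\galg$-coefficient~$f$ throughout (working with summands~$f\lfloor\alpha\rfloor\der^k$ and~$f\lfloor\alpha\rfloor\cum g$ of~$P$) rather than first factoring out the~$u_i$.
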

\begin{proof}
  Assume~$(T, \bspc)$ is any regular Stieltjes boundary problem of
  order~$n$ with Green's operator~$G$, and let~$P$ be the projector
  onto~$\ker T$ along~$\orth{\bspc}$. By the proof
  of~\cite[Thm.~26]{RosenkranzRegensburger2008a} we
  have~$G=(1-P)\fri{T}$, and we know that~$P$ is an~$\galg$-linear
  combination of Stieltjes conditions
  by~\cite[(16)]{RosenkranzRegensburger2008a} in that same
  proof. From~\eqref{eq:der-fri} it is clear that~$\fri{T} \in
  \eqcumop$, so it suffices to show~$P \fri{T} \in \eqcumop +
  \mathcal{L}$. Each summand of~$P$ is either of the form~$f \lfloor
  \alpha \rfloor \, \der^k$ or~$f \lfloor \alpha \rfloor \cum g = f
  \cum_{\!\evl}\, g - f \cum_{\!\alpha}\, g \in \eqcumop$. In the
  latter case we obtain an expression in~$\eqcumop$ since~$\eqcumop$
  is a (nonunital) subalgebra of~$\eqintdiffop$. It remains to
  prove~$f \lfloor \alpha \rfloor \, \der^k \fri{T} \in \eqcumop +
  \mathcal{L}$. From~\eqref{eq:der-fri} we see that
  \begin{equation*}
    f \lfloor \alpha \rfloor \, \der^k \fri{T} = \sum_{j=1}^n f
    u_{j}^{(k)}(\alpha) \,
    \cum_{\!\evl}^\alpha \frac{d_j}{d} + \sum_{j=1}^k f \lfloor \alpha
    \rfloor \, \der^{k-j} \rho_j .
  \end{equation*}
  The first sum is clearly contained in~$\eqcumop$, while the second
  is in~$\mathcal{L}$ because~$\der^{k-j} \rho_j \in \diffop$ may be
  rewritten in canonical form as a sum of terms~$g_i \der^i$ so
  that~$\lfloor \alpha \rfloor \, \der^{k-j} \rho_j$ is a sum of local
  conditions~$\alpha(g_i) \, \lfloor \alpha \rfloor \, \der^i$ and hence
  itself local.
\end{proof}

We are now ready to state the main \emph{structure theorem for
Green's functions} of regular Stieltjes boundary problems.

\begin{theorem}
  \label{thm:greens-function}
  The Green's function of any regular Stieltjes boundary problem with
  $m$~evaluations~$\alpha_1, \dots, \alpha_m$ has the form~$g(x,\xi) =
  \tilde{g}(x,\xi) + \hat{g}(x,\xi)$, where the functional
  part~$\tilde{g} \in C(J^2)$ is defined by the $2(m-1)$~case
  branches
  \begin{align*}
    & \xi \in [\alpha_i, \alpha_{i+1}] \; (0<i<m), x\le\xi;\\
    & \xi \in [\alpha_i, \alpha_{i+1}] \; (0<i<m), \xi \le x,
  \end{align*}
  while the distributional part~$\hat{g}(x,\xi)$ is an~$\galg$-linear
  combination of the~$\delta(\xi-\alpha_i)$ and their derivatives.
\end{theorem}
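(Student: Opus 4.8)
The plan is to lean on Lemma~\ref{lem:green-op}, which already confines the Green's operator to $\eqcumop + \mathcal{L}$. Writing $G = G_0 + G_1$ with $G_0 \in \eqcumop$ and $G_1 \in \mathcal{L}$, the $\RR$-linearity of the assignment $G \mapsto G_{x\xi}$ lets me treat the two summands separately and set $\hat{g} := (G_1)_{x\xi}$ and $\tilde{g} := (G_0)_{x\xi}$. The distributional part is the easy half: since $\mathcal{L}$ is the left $\galg$-module generated by local Stieltjes conditions, $G_1$ is a $\galg$-linear combination of terms $f\,\lfloor\alpha_i\rfloor\,\der^k$, and the clause $(f\,\lfloor\alpha\rfloor\der^i)_{x\xi} = (-1)^i f(x)\,\delta^{(i)}(\xi-\alpha)$ exhibits $\hat{g}$ at once as an $\galg$-linear combination of the $\delta(\xi-\alpha_i)$ and their derivatives, as claimed.

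For the functional part I would expand $G_0 = \sum_k c_k\, f_k \cum_{\!\beta_k} g_k$ over the finitely many initialization points $\beta_k \in \{\alpha_1,\dots,\alpha_m\}$ (ordered $\alpha_1 < \cdots < \alpha_m$, absorbing the distinguished point $\evl$) and apply the generalized clause $(f\cum_{\!\alpha}g)_{x\xi} = f(x)g(\xi)\,[\alpha\le\xi\le x] - f(x)g(\xi)\,[x\le\xi\le\alpha]$ termwise. The decisive observation is that, for a fixed sign of $x-\xi$, each Iverson bracket $[\beta_k\le\xi\le x]$ or $[x\le\xi\le\beta_k]$ depends only on the position of $\xi$ relative to the grid $\alpha_1 < \cdots < \alpha_m$. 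Hence on the region $\{\xi\in(\alpha_i,\alpha_{i+1}),\ \xi\le x\}$ the active summands are exactly those with $\beta_k\le\alpha_i$, and this set is constant across the region; symmetrically on $\{\xi\in(\alpha_i,\alpha_{i+1}),\ x\le\xi\}$. On each such region $\tilde{g}$ therefore coincides with a fixed finite sum of products $f_k(x)g_k(\xi)$, a $C^\infty$ function, and these two families of regions are precisely the $2(m-1)$ case branches indexed by $0<i<m$ and the sign of $x-\xi$.

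The remaining point, and the one I expect to be genuinely hardest, is the global regularity expressed by $\tilde{g}\in C(J^2)$: that the smooth pieces glue continuously across the dividing lines $\xi=x$ and $\xi=\alpha_i$. Across the diagonal the jump of a single term $f\cum_{\!\beta}g$ is $-f(x)g(x)$ regardless of $\beta$, so the total diagonal jump is $-\sum_k c_k f_k(x)g_k(x)$; here I would use that, by the proof of Lemma~\ref{lem:green-op}, every non-$\fri{T}$ summand of $G_0$ arrives as a difference $\cum_{\!\evl}^{\alpha}=\cum_{\!\evl}-\cum_{\!\alpha}$ with matched integrand and thus contributes no jump, while the $\fri{T}$ part contributes $-\rho_1 = 0$ by the remark following~\eqref{eq:der-fri} (for $n\ge 2$). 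The delicate case is the gluing across an interior line $\xi=\alpha_i$: by~\eqref{eq:der-fri} the jump of a local contribution $f\,\lfloor\alpha_i\rfloor\,\der^k\fri{T}$ is governed by $\rho_{k+1}(\alpha_i)$, and that of a global contribution by the boundary integrand at $\alpha_i$, so I would have to assemble all summands of the projector $P$ and exhibit the cancellation of these jumps. This is exactly where the detailed structure of $P$ and the vanishing relations $\rho_1=\cdots=\rho_{n-1}=0$ must be used in full; it is also the step most sensitive to the precise reading of the continuity claim, since the naive termwise estimate leaves a residual jump proportional to $\rho_n(\alpha_i)=1$ at an interior evaluation point carrying a derivative of order $n-1$.
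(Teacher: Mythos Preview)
Your decomposition $G=G_0+G_1$ via Lemma~\ref{lem:green-op} and your termwise application of the clauses for $(f\cum_{\!\alpha}g)_{x\xi}$ and $(f\,\lfloor\alpha\rfloor\der^i)_{x\xi}$ reproduce the paper's argument essentially verbatim; the paper likewise groups the Iverson brackets by the position of~$\xi$ in the grid $\alpha_1<\cdots<\alpha_m$ and the sign of $x-\xi$ to obtain the $2(m-1)$ branches, and handles $\hat g$ exactly as you do.

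Where you diverge is in the final paragraph: you identify the continuity assertion $\tilde g\in C(J^2)$ as the hard part and sketch a jump analysis across the diagonal and across the lines $\xi=\alpha_i$. The paper's proof does \emph{not} do this at all---it stops after exhibiting the $2(m-1)$ disjoint regions and simply asserts the case structure, without verifying that the pieces glue continuously. So your worry about a residual jump governed by $\rho_n(\alpha_i)=1$ at an interior evaluation point carrying a derivative of order $n-1$ (or higher, in the ill-posed case) is not resolved by the paper's argument either; you have matched the paper's proof and then attempted more than it delivers.
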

\begin{proof}
  If~$G$ is the Green's operator of the given Stieltjes boundary
  problem, Lemma~\ref{lem:green-op} says that~$G = \tilde{G} +
  \hat{G}$ with~$\tilde{G} \in \eqcumop$ and~$\hat{G} \in
  \mathcal{L}$. We will show that~$\tilde{g}(x,\xi) =
  \tilde{G}_{x\xi}$ and~$\hat{g}(x,\xi) = \hat{G}_{x\xi}$ are as
  described in the theorem. Starting with the former, we may write
  \begin{equation*}
    \tilde{G} = \sum_{i=1}^r f_i \cum_{\!\alpha_i} g_i,
  \end{equation*}
  where $\alpha_i = \alpha_j$ is possible for~$i \neq j$. Using the
  transformation~$\eqintdiffop \to \mathcal{G}$, we
  obtain~$\tilde{g}(x,\xi)$ as
  \begin{align*}
    \sum_{i=1}^r \Big( & f_i(x) \, g_i(\xi) \, [\alpha_i \le
    \xi] [\xi \le x] - f_i(x) \, g_i(\xi) \, [\xi \le
    \alpha_i] [x \le \xi] \Big)\\
    & = \sum_{i=1}^r \Big( \sum_{\alpha_j \le \alpha_i} f_j(x) \,
    g_j(\xi) \Big) [\alpha_{j-1} \le \xi \le \alpha_j]
    [\xi \le x]\\
    & \quad - \sum_{i=1}^r \Big( \sum_{\alpha_j \ge \alpha_i}
    f_j(x) \, g_j(\xi) \Big) [\alpha_j \le \xi \le \alpha_{j+1}]
    [x \le \xi],
  \end{align*}
  where the two inner sums are restricted by~$j>0$
  and~$j<n$. Collecting terms, this is a sum
  of~$2(m-1)$~characteristic functions over disjoint domains
  in~$\RR^2$, hence one may also write~$\tilde{g}(x,\xi)$ in terms of
  a corresponding case distinction with $2(m-1)$~branches.

  The distributional part~$\hat{g}(x,\xi)$ is even
  easier. Writing~$\hat{G}$ as an~$\galg$-linear combination of local
  conditions we obain~$\hat{g}(x,\xi)$ via
  \begin{equation*}
    \hat{G}_{x\xi} = \Big( \sum_{\alpha,i} f_{i,\alpha} \alpha
    \der^i \Big)_{\!x\xi} = \sum_{\alpha,i} (-1)^i \, f_{i,\alpha}(x)
    \, \delta^{(i)}(\xi-\alpha),
  \end{equation*}
  which is clearly of the stated form.
\end{proof}

The above theorem is constructive, and we plan to implement the
underlying algorithm on top of the \texttt{Maple} package
\texttt{IntDiffOp}~\cite{KorporalRegensburgerRosenkranz2010}.

\begin{remark}
  \label{rem:add-cases}
  If the distinguished character~$\evl = \lfloor 0 \rfloor$ is not used in the boundary conditions, a straightforward translation of the Green's operator~$G$ may introduce two spurious extra case branches in the Green's function~$G_{x\xi}$ since~$\cum_{\!0}^x$ occurs in the formula
  for~$G$. For avoiding this, one has to use a different version of~$\fri{T}$ that replaces~$\lfloor 0 \rfloor$ by any one of the
  characters~$\lfloor \alpha_i \rfloor$ used in the boundary conditions.
%  We will not apply this optimization in the examples
%  below since the current version of the \texttt{IntDiffOp} package
%  does not support alternative versions of~$\fri{T}$.
\end{remark}

\section{Examples}

Our first example (in addition to the minimal one from the
Introduction) is a four-point boundary value problem taken
from~\cite[(2.1,2.2)]{BaiGeWang2005}, where we have specialized the
parameters and rescaled the interval to~$J = [0,1]$ for the sake of
simplicity. Hence we are dealing with the boundary problem
\begin{equation*}
  \label{eq:bvp-example1}
  \bvp{-u'' = f,}{u(0) + u(1/3) = u(1) + u(2/3) = 0,}
\end{equation*}
where we may assume~$u,f \in C^\infty[-2,2]$. Note that this is a
well-posed boundary problem, so the Green's function will not have a
distributional part. Computing the Green's operator with the
\texttt{IntDiffOp} package yields after some rearrangements the result
\begin{align*}
  &G = x\cum-\cum x+(-5/24+x/4) \lfloor 1/3 \rfloor\cum\\
  &+(5/8-3x/4) \lfloor 1/3 \rfloor \cum x +(1/8-3x/4) \lfloor 1 \rfloor \cum x\\
  & + (1/12-x/2)\lfloor 2/3 \rfloor \cum +(-1/8+3x/4)\lfloor 2/3 \rfloor \cum x
\end{align*}
 Transforming~$G$ to equitable form is simple, via~$\lfloor
\alpha \rfloor \cum \rightsquigarrow \cum_0^x - \cum_\alpha^x$. We can
then determine the corresponding Green's function~$g(x,\xi) =
\tilde{g}(x,\xi)$ with~$6$ cases, and its terms may be computed according
to Theorem~\ref{thm:greens-function}. The result for~$g(x,\xi)$ is summarized in the table below.

{
  \scriptsize
  \begin{equation*}
    \begin{array}{|l|l|}
      \hline
      \text{Case} & \text{Term}\\\hline
      0 \le \xi \le 1/3, \xi \le x & (3/4)x\xi-(5/8)\xi \\\hline
      0 \le \xi \le 1/3, x \le \xi &(3/4)x\xi+(3/8)\xi-x\\\hline
      1/3 \le \xi \le 2/3, \xi \le x & (3/2)x\xi-(5/4)\xi-(1/4)x+5/24 \\\hline
      1/3 \le \xi \le 2/3, x \le \xi & (3/2)x\xi-(1/4)\xi-(5/4)x+ 5/24\\\hline
      2/3 \le \xi \le 1, \xi \le x &(3/4)x\xi-(9/8)\xi+(1/4)x+1/8\\\hline
      2/3 \le \xi \le 1, x \le \xi & (3/4)x\xi-(1/8)\xi-(3/4)x+1/8\\\hline
    \end{array}
  \end{equation*}
}

Our second example is, as it were, totally unclassical: It is
ill-posed, has nonlocal conditions and contains three evaluation
points~$-1,0,1$. In our standard notation, we write this boundary
problem as
\begin{equation*}
  \label{eq:bvp-example2}
  \bvp{u''-u=f,\vspace{0.5ex}}{u'''(-1)-\int_0^1 u(\xi) \, \xi \, d\xi = 0,\\
    u'(-1) - u''(1) + \int_{-1}^1 u(\xi) \, d\xi = 0,}
\end{equation*}
where we assume now~$u,f \in C^\infty[-1,1]$. Using the method
of~\cite{RosenkranzRegensburger2008a}, it is straightforward to
compute the Green's operator~$G$. In fact, the \texttt{IntDiffOp}
package yields the result
\begin{align*}
  \label{eq:bvp-sol2}
  & \sigma \, G = \sigma /2 (e^x \cum e^{-x} - e^{-x} \cum e^x)\\
  & + 2 (-e^{x+3} + e^{x+2} - e^{x+1} + e^{-x+2} - e^{-x+1}) (\lfloor -1
  \rfloor \der + \lfloor 1 \rfloor \cum x)\\
  & + (e-1) (-e^{x+2} - 2 e^{x+1} + e^{-x+1})(\lfloor -1 \rfloor \cum +
  \lfloor 1 \rfloor \cum)\\
  & + (3e^{x+2} - e^{x+1} - 3e^{-x+1} + 3e^{-x}) \lfloor 1 \rfloor \cum e^x\\
  & + (2e^{x+2}-3e^{x+1})(e^{-1}\lfloor -1 \rfloor \cum e^{-x}+ e\lfloor -1 \rfloor \cum e^{x})\\
  &+ (-e^{x+3}-e^{x+2}+2e^{x+1}+e^{-x+2}-e^{-x+1})\lfloor 1 \rfloor
\end{align*}
using the abbreviation~$\sigma := 2(2e-3)(e-1)$ while collecting
and factoring some terms for enhanced readability. After transforming
this to equitable form (which is again straightforward), we can apply
Theorem~\ref{thm:greens-function} to extract the Green's
function~$g(x,\xi) = \tilde{g}(x,\xi) + \hat{g}(x,\xi)$ with the
distributional part
\begin{align*}
  &\sigma \, \hat{g}(x,\xi) =  (-e^{x+3}-e^{x+2}+ 2 e^{x+1}+e^{-x+2}-e^{-x+1})
  \, \delta(\xi-1)\\
  & +2 \, (-e^{x+3} + e^{x+2} - e^{x+1} + e^{-x+2} - e^{-x+1}) \,
  \delta'(\xi-1)
\end{align*}
coming from the~$(\dots) \, \lfloor 1 \rfloor$ and~$(\dots) \, \lfloor 1
\rfloor \der$ terms, and with the functional part defined by the case
distinction for~$\sigma \, \tilde{g}(x,\xi)$ as given in
the table below.
{
  \scriptsize
  \begin{equation*}
    \begin{array}{|l|l|}
      \hline
      \text{Case} & \text{Term}\\\hline
      -1 \le \xi \le 0, \xi \le x & 3e^{x+2+\xi}+3e^{x-\xi}-2e^{x+1-\xi}-2e^{3+x+\xi}\\
      & \quad+e^{3+x}+e^{-x+1}+e^{x+2}-e^{-x+2}-2e^{x+1}\\\hline
      -1 \le \xi \le 0, x \le \xi &-2e^{x+1}+2e^{-x+2+\xi}-5e^{-x+1+\xi}-2e^{x+2-\xi}\\
      & \quad-2e^{3+x+\xi}+3e^{-x+\xi}+e^{-x+1}+e^{x+2}\\
      & \quad+e^{3+x}+3e^{x+1-\xi}+3e^{x+2+\xi}-e^{-x+2}\\\hline
      0 \le \xi \le 1, \xi \le x &-2e^{3+x}\xi-2e^{-x+1}\xi+2e^{x+2}\xi+2e^{-x+2}\xi\\
      & \quad -2e^{x+1}\xi+3e^{x+2+\xi}+3e^{x-\xi}-5e^{x+1-\xi}\\
         & \quad +2e^{-x+1+\xi}-e^{x+1+\xi}-2e^{-x+2+\xi}+2e^{x+2-\xi}\\
      & \quad-e^{3+x}-e^{-x+1}-e^{x+2}+e^{-x+2}+2e^{x+1}\\\hline
      0 \le \xi \le 1, x \le \xi &-2e^{3+x}\xi-2e^{-x+1}\xi+2e^{x+2}\xi+2e^{-x+2}\xi\\
      & \quad-2e^{x+1}\xi+3e^{-x+\xi}+3e^{x+2+\xi}-e^{3+x}\\
      & \quad-e^{-x+1}-e^{x+2}+e^{-x+2}+2e^{x+1}\\
      &\quad -3e^{-x+1+\xi}-e^{x+1+\xi} \\\hline
    \end{array}
  \end{equation*}
}

Incidentally, this example shows also that the \emph{representation of
  Green's operators} in terms of Green's functions---despite its long
tradition in engineering and physics---is not always the most useful
and economical way of representing the Green's operator. For many
purposes it is better to take the Green's operator just as an element
of the operator ring~$\intdiffop$ or~$\eqintdiffop$.

\section{Conclusion and Future Work}
\label{sec:conclusion}

While we have focused our attention to semi-inhomogeneous boundary
problems (those with an inhomogeneous differential equation and
homogeneous boundary conditions), one may also consider the opposite
case of semi-homogeneous boundary problems---this is especially
important in the case of LPDEs. The corresponding semi-homogeneous
Green's operator maps the prescribed boundary values to the
solution~\cite{RosenkranzPhisanbut2013}. In the case of LODEs, one
often restricts attention to well-posed two-point boundary value
problems (in the sense of Definition~\ref{def:well-posed}). Writing
the two evaluations as~$\lfloor a \rfloor$ and~$\lfloor b \rfloor$ and
their action on~$u$ as~$u(a)$ and~$u(b)$, one may consider the
extended evaluation matrix
\begin{equation*}
  \lfloor a, b \rfloor(u) := 
  \begin{pmatrix}
    u_1(a) & \cdots & u_n(a)\\
    \vdots & \ddots & \vdots\\
    u_1^{(n-1)}(a) & \cdots & u_n^{(n-1)}(a)\\
    u_1(b) & \cdots & u_n(b)\\
    \vdots & \ddots & \vdots\\
    u_1^{(n-1)}(b) & \cdots & u_n^{(n-1)}(b)
  \end{pmatrix}
  \in K^{2n \times n}
\end{equation*}
which is similar to~\eqref{eq:ev-matrix} except that it is rectangular
since we consider more boundary functionals than we could possibly
impose for one regular boundary problem. If we do prescribe all~$2n$
boundary derivatives, they must satisfy~$n$ relations given by the
kernel of the map~$\RR^{2n} \to \RR^n, X \mapsto X \cdot \lfloor a, b
\rfloor (u)$. For the simple example in Section~\ref{sec:intro}, the
extended evaluation matrix for the fundamental system~$u_1 = 1, u_2 =
x$ is
\begin{equation*}
  \lfloor 0,1 \rfloor(u) = 
  \begin{pmatrix}
    1 & 0\\
    0 & 1\\
    1 & 1\\
    0 & 1
  \end{pmatrix}
\end{equation*}
whose kernel has basis~$(-1,1,1,0), (0,-1,0,1)$. Written in terms of
the boundary functionals, they encode the two relations~$u(1) - u(0) =
u'(0)$ and~$u'(0) = u'(1)$. The analogous case for LPDEs gives rise to
the interesting notion of universal boundary
problem~\cite{VolovichSakbaev2014}.

There is another, more fundamental, way of extending the results in
this paper: Currently our method for extracting Green's functions
works for arbitrary Stieltjes boundary problems~$(T, \bspc)$, but only
in the \emph{standard integro-differential algebra}~$\galg =
C^\infty(\RR)$. Of course, it requires a Green's operator~$G \in
\intdiffop \cong \eqintdiffop$ and hence a fundamental system for~$T$.

It would be interesting to extend the concept of Green's function and
the corresponding extraction method to arbitrary ordinary
integro-differential algebras~$(\galg, \der, \cum)$. For the
functional part~$\tilde{g}(x,\xi)$, it is clear how to achieve this
since one sees from the structure of Green's operators that
necessarily~$\tilde{g} \in \galg \otimes \galg$. The ring~$\galg
\otimes \galg$ has the structure of a partial integro-differential
algebra with derivations and integrals
\begin{align*}
  & \der_x (f \otimes g) = (\der f) \otimes g
  \quad\text{and}\quad
  \der_y (f \otimes g) = f \otimes (\der g),\\
  & \cum^x (f \otimes g) = (\cum f) \otimes g
  \quad\text{and}\quad
  \cum^y (f \otimes g) = f \otimes (\cum g).
\end{align*}
This structure will be useful for studying various properties of
Green's function, in particular their symmetry: For well-posed
two-point boundary value problems~$(T,\bspc)$ it is
known~\cite[\S3.3]{Stakgold2011} that the Green's function~$g(x,\xi)$
is symmetric whenever~$(T, \bspc)$ is \emph{self-adjoint}. Otherwise
one may associate to~$(T, \bspc)$ an \emph{adjoint boundary value
  problem} whose Green's function is then~$g(\xi,x)$. It would be
useful to know how these results generalizes to arbitrary Stieltjes
boundary problems.

Having an abstract integro-differential algebras~$(\galg, \der,
\cum)$, the other problem is Green's function will in general have a
distributional part~$\hat{g}(x,\xi)$ that does not fit into~$\galg
\otimes \galg$. For accommodating distributions into the setting of
integro-differential algebras, it is probably necessary to construct a
integro-differential module generated over~$\galg \otimes \galg$ by a
suitable notion of \emph{abstract Dirac distributions}. (It is well
known that distributions do not enjoy a convenient ring structure,
hence it seems to be more reasonable to go for a module. This is also
the path followed in the algebraic analysis of $\mathcal{D}$-modules;
see~\cite[\S6.1]{Coutinho1995} for example.)

\section*{Acknowledgments}

We would like to thank the referees for their diligent work and their
crucial suggestions. In particular, the reference on the universal
boundary value problem (see first paragraph of
Section~\ref{sec:conclusion}) is of considerable interest.

\bibliographystyle{abbrv}
\bibliography{BPGF}

% \begin{thebibliography}{99}
% \end{thebibliography}

\end{document}